\newtheorem{theorem}{Theorem}[section]
\newtheorem{conjecture}[theorem]{Conjecture}
\newtheorem{example}{Example}[section]
\newtheorem{lemma}[theorem]{Lemma}
\theoremstyle{definition}
\DeclareMathOperator{\td}{\bf{td}}
\title{Classes of critical graphs for tree-depth}
\author{Michael D. Barrus\thanks{Department of Mathematics, University of Rhode Island, Kingston, Rhode Island 02881, United States; email: \texttt{barrus@uri.edu}} ~and
John Sinkovic\thanks{Department of Mathematics and Statistics, Georgia State University, Atlanta, Georgia 30302, United States; email: \texttt{johnsinkovic@gmail.com}}}
\begin{document}

\maketitle

\begin{abstract}
A $k$-ranking of a graph $G$ is a labeling of the vertices of $G$ with values from $\{1,\dots,k\}$ such that any path joining two vertices with the same label contains a vertex having a higher label. The tree-depth of $G$ is the smallest value of $k$ for which a $k$-ranking of $G$ exists. The graph $G$ is $k$-critical if it has tree-depth $k$ and any proper minor of $G$ has smaller tree-depth, and it is 1-unique if for every vertex $v$ in $G$, there exists an optimal ranking of $G$ in which $v$ is the unique vertex with label 1.

We present several classes of graphs that are both $k$-critical and $1$-unique, providing examples that satisfy conjectures on critical graphs discussed in [M.D.~Barrus and J.~Sinkovic, Uniqueness and minimal obstructions for tree-depth, submitted].

\medskip
\emph{Keywords:} Graph minors, tree-depth, vertex ranking
\end{abstract}


\section{Introduction}
The \emph{tree-depth} of a graph $G$, denoted $\td(G)$, is the smallest natural number $k$ such that each vertex of $G$ may be labeled with an element from $\{1,\dots,k\}$ so that every path joining two vertices with the same label contains a vertex having a larger label.
A \emph{$k$-ranking} of $G$ is such a labeling. We say that $G$ is ($k$-)\emph{critical} if it has tree-depth $k$ and any proper minor of $G$ has smaller tree-depth, and it is \emph{1-unique} if for every vertex $v$ in $G$, there exists an optimal ranking of $G$ in which $v$ is the unique vertex with label 1.

The definition of 1-uniqueness was introduced in~\cite{BarrusSinkovic15}, motivated in part by the following two conjectures:

\begin{conjecture}[\cite{DGT}] \label{conj: critical graph orders}
	Every critical graph with tree-depth $k$ has at most $2^{k-1}$ vertices.
\end{conjecture}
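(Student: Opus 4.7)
The plan is to proceed by induction on $k$, with $k=1$ trivial since $K_1$ is the unique $1$-critical graph. For the inductive step, first observe that every $k$-critical graph $G$ must be connected: otherwise, the component of largest tree-depth would be a proper minor of $G$ still of tree-depth $k$, contradicting criticality. Now fix any optimal $k$-ranking of the connected graph $G$. Exactly one vertex $v$ can carry the label $k$, because two such vertices would be joined by a path in $G$ along which some vertex must carry a label larger than $k$, which is impossible.

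Deleting $v$ yields a graph $G - v$ of tree-depth at most $k-1$, with components $C_1, \ldots, C_m$, so the target inequality becomes $1 + \sum_{i=1}^{m} |V(C_i)| \le 2^{k-1}$. The natural move is to apply the inductive hypothesis to each $C_i$, which would require that each $C_i$ is $(k-1)$-critical. This is the principal obstacle: criticality is a global property of $G$ and need not descend to the components of $G - v$, and tree-depth alone imposes no bound on the number of vertices. To get around this, I would try to show that each $C_i$ may be replaced by a $(k-1)$-critical minor $C_i'$ in such a way that the subgraph induced by $\{v\} \cup \bigcup_i V(C_i')$ still has tree-depth $k$; the criticality of $G$ would then force $|V(C_i)| = |V(C_i')| \le 2^{k-2}$ by induction. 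The $1$-uniqueness machinery of the companion paper looks like the right instrument for this reduction, since it provides the flexibility to rearrange optimal rankings while keeping the label-$k$ vertex pinned down, so that critical cores in the components can be located without dropping the global tree-depth.

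Once each $|V(C_i)|$ is bounded by $2^{k-2}$, one still has to control $m$. Paths show the extremal configuration has $m = 2$, so the remaining step is to argue that in the extremal case the neighbors of $v$ in distinct components cannot support too many label-$(k-1)$ vertices without producing a contractible edge incident to $v$ whose contraction preserves tree-depth $k$, again contradicting criticality. The hardest part of the whole plan, which I expect will require the bulk of the technical work, is the descent of criticality to components: verifying that the pruning of each $C_i$ to a critical core really yields a \emph{proper} minor of $G$ that still has tree-depth $k$, rather than a graph whose tree-depth silently drops. This is precisely the kind of structural propagation that the $1$-unique critical families constructed in the present paper are intended to illustrate and test.
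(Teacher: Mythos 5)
This statement is not a theorem of the paper at all: it is Conjecture~\ref{conj: critical graph orders}, quoted from~\cite{DGT} as an open problem, and the present paper proves nothing toward it beyond exhibiting families of critical graphs (paths, cycles, the graphs $R_{k,t}$ and $\mathcal{Q}_{k,s}$) that are consistent with it. So there is no paper proof to compare against, and your proposal cannot be judged ``correct'': it is an attack plan whose pivotal steps are exactly the open difficulties. The step you yourself flag as the principal obstacle --- pruning each component $C_i$ of $G-v$ to a $(k-1)$-critical minor $C_i'$ while guaranteeing that $\{v\}\cup\bigcup_i V(C_i')$ still induces (or contains as a minor) a graph of tree-depth $k$ --- is not a technicality that the $1$-uniqueness machinery is known to supply; tree-depth of $G$ is not determined by the tree-depths of the components of $G-v$ (their attachment to $v$ and the choice of the top-labeled vertex matter), and replacing a component by a minor of the same tree-depth can silently lower $\td$ of the whole graph. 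Without that step, criticality of $G$ gives you no bound whatsoever on $|V(C_i)|$, since tree-depth alone does not bound order.

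There is also a quantitative gap even if that step were granted. If every component of $G-v$ were $(k-1)$-critical, induction gives $|V(C_i)|\le 2^{k-2}$, but then two components already yield $1+2\cdot 2^{k-2}=2^{k-1}+1$ vertices, exceeding the target; the path $P_{2^{k-1}}$ shows the true extremal picture is one component of order $2^{k-2}$ and a remainder of total order $2^{k-2}-1$ (which need not be critical at all), so a per-component critical-core bound cannot close the count, and nothing bounds the number $m$ of components a priori (indeed bounding $\deg v$ is itself Conjecture~\ref{conj: max degree}, also open). Your closing appeal to ``a contractible edge incident to $v$ whose contraction preserves tree-depth $k$'' is exactly the kind of statement that would need a proof and currently has none. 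In short: the base case, connectivity, uniqueness of the label-$k$ vertex, and $\td(G-v)\le k-1$ are fine, but everything after that is the unsolved content of the conjecture, not a proof of it.
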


\begin{conjecture} \label{conj: max degree}
	Every critical graph with tree-depth $k$ has maximum degree at most $k-1$.
\end{conjecture}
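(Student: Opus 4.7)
The natural strategy is to exploit the interaction between vertex degree and ranking labels at a low-labeled vertex. Suppose $G$ has tree-depth $k$ and admits, for some vertex $v$ of maximum degree, an optimal $k$-ranking $c$ with $c(v) = 1$. Since adjacent vertices must receive distinct labels in any ranking (the edge itself is a path violating the ranking property otherwise), every neighbor of $v$ gets a label in $\{2, \dots, k\}$. Moreover, if two neighbors $u_1, u_2$ of $v$ shared a label $\ell \geq 2$, the path $u_1 v u_2$ would join two same-labeled vertices while passing only through $v$ (label $1 < \ell$), contradicting the ranking property. Thus the neighbors of $v$ receive pairwise distinct labels in $\{2,\dots,k\}$, giving $\deg(v) \leq k-1$. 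The problem therefore reduces to: in a critical graph, can every vertex be placed at label $1$ in some optimal ranking?

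This is exactly where the notion of 1-uniqueness becomes decisive. By definition, a 1-unique graph has, for each $v$, an optimal ranking in which $v$ is the unique vertex labeled $1$; combined with the observation above, this yields $\deg(v) \leq k-1$ immediately. So Conjecture~\ref{conj: max degree} holds automatically on any critical graph that can be shown to be 1-unique. The plan for the paper is then visible: construct broad families of $k$-critical, 1-unique graphs and thereby certify the conjecture (and the order bound in Conjecture~\ref{conj: critical graph orders}) on each family. For each candidate family, the verification amounts to (i) checking that membership implies tree-depth exactly $k$, (ii) checking that every proper minor drops the tree-depth, and (iii) exhibiting, for each vertex $v$, an explicit optimal ranking placing $v$ alone at label $1$.

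The main obstacle to resolving the conjecture in full generality is establishing 1-uniqueness (or at least the weaker ``label-$1$ placement at every vertex'') for \emph{every} critical graph, which is itself a deep open problem. A direct attack that bypasses 1-uniqueness would have to handle a hypothetical $v$ with $\deg(v) \geq k$ using only the criticality of $G$: each of $G - v$ and the contractions $G / v u_i$ admits a $(k-1)$-ranking, and one would hope to synchronize these overlapping rankings into a single $(k-1)$-ranking of $G$ to derive a contradiction. Coordinating $d \geq k$ separate rankings that agree on the common part $G - v$ appears combinatorially delicate, and I expect this gluing step to be the hardest part of any minor-based proof. Absent such a general argument, the class-by-class approach pursued here is the most promising route currently available.
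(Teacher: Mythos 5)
The statement you were asked about is an open conjecture, and the paper contains no proof of it: the authors only record (citing~\cite{BarrusSinkovic15}) that Conjecture~\ref{conj: 1-unique} implies Conjecture~\ref{conj: max degree}, and then exhibit families of $k$-critical, $1$-unique graphs on which the conjectures hold. Your proposal is faithful to this: your local argument that a vertex $v$ ranked $1$ must have neighbors with pairwise distinct labels in $\{2,\dots,k\}$ (so $\deg(v)\leq k-1$) is exactly the known reduction to $1$-uniqueness, and your acknowledgment that establishing this for all critical graphs remains open, with the class-by-class verification as the available route, is precisely the stance and approach of the paper.
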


The paper~\cite{BarrusSinkovic15} presents the following additional conjecture and shows that it implies Conjecture~\ref{conj: max degree}:

\begin{conjecture} \label{conj: 1-unique}
	All critical graphs are 1-unique.
\end{conjecture}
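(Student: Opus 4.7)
My plan is to reduce $1$-uniqueness at $v$ to a decomposition-existence problem and then attack it by induction on $k$, using minor-minimality to control the extension step. Under the standard correspondence between rankings and rooted-forest decompositions (a vertex's label equals the height of its subtree in the forest), $G$ is $1$-unique at $v$ if and only if $G$ admits an optimal tree-depth decomposition of depth $k$ in which $v$ is a leaf sitting on a root-to-$v$ path of length exactly $k$; in particular $N(v)$ must lie along that root-path, which is already consistent with Conjecture~\ref{conj: max degree}. The base cases $k \le 2$ are immediate (the only $k$-critical graphs are $K_1$ and $K_2$), so fix a $k$-critical $G$ and an arbitrary $v \in V(G)$.

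By criticality, $G - v$ is a proper minor and so $\td(G-v) \le k-1$; combined with the recursion $\td(G) = 1 + \min_u \td(G-u)$ (valid for connected graphs with at least two vertices, and $k$-critical graphs are connected for $k \ge 2$), this gives $\td(G-v) = k-1$. I would then try to choose an optimal $(k-1)$-ranking $\rho$ of $G - v$ so that the labeling of $G$ defined by $\rho'(u) = \rho(u) + 1$ for $u \ne v$ and $\rho'(v) = 1$ is a valid $k$-ranking. The extension $\rho'$ is automatically valid on pairs inside $G - v$, so the only obstruction is a same-label pair $x, y$ of shifted label $\ell \ge 2$ joined by a $G$-path through $v$ on which no interior vertex carries a label exceeding $\ell$.

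The main obstacle is this coordination of labels around $N(v)$: the graph of two triangles sharing the vertex $v$ (which is \emph{not} critical, despite satisfying $\td(G-v) = k - 1$) shows that the vertex-deletion hypothesis alone is insufficient, so the proof must engage contraction-minimality as well. For each $u \in N(v)$ the minor $G/uv$ has $\td \le k-1$, and these contraction-minors, together with the inductive $1$-uniqueness of $(k-1)$-critical subminors of $G - v$, ought to supply enough rigidity to assemble a compatible $(k-1)$-ranking of $G - v$ whose restrictions to the various components meeting $N(v)$ agree on their boundary values. Turning this intuition into a proof is the crux of the conjecture; I expect it to be entangled with Conjecture~\ref{conj: max degree} and to demand a structural refinement of the theory of critical graphs---in particular, a characterisation of how a vertex $v$ can attach to the rest of a critical graph---rather than yielding to a direct ranking-combinatorial manipulation.
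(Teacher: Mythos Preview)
The statement you are attempting is presented in the paper as Conjecture~\ref{conj: 1-unique}, an open problem imported from~\cite{BarrusSinkovic15}; the paper offers no proof of it. So there is no ``paper's own proof'' to compare against.

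Your proposal is likewise not a proof, and you say as much. You set up the natural inductive framework correctly: for $k$-critical $G$ and any $v$, criticality forces $\td(G-v)=k-1$, and the task reduces to finding a $(k-1)$-ranking $\rho$ of $G-v$ such that shifting all labels up by one and assigning $v$ the label $1$ remains a valid ranking. You identify the obstruction precisely---coordination of labels along paths through $v$---and you give a non-critical example showing that vertex-deletion minimality alone cannot close the gap. But then you write that ``turning this intuition into a proof is the crux of the conjecture'' and that it will ``demand a structural refinement of the theory of critical graphs\dots rather than yielding to a direct ranking-combinatorial manipulation.'' That is an honest and accurate concession: the step you leave open \emph{is} the whole conjecture, and nothing in your outline (the contraction minors $G/uv$, inductive $1$-uniqueness of $(k-1)$-critical subminors) is shown to supply the needed compatible ranking.

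The paper's actual contribution goes in a different direction: rather than attacking the conjecture in general, it exhibits several infinite families---$K_k$, $P_{2^{k-1}}$, $C_{2^{k-2}+1}$, the graphs $R_{k,t}$, and the classes $\mathcal{Q}_{k,s}$---and verifies by direct construction (via Theorems~\ref{thm:starclique}, \ref{thm:conjecture}, and \ref{thm:newbound}) that each member is both $k$-critical and $1$-unique. These serve as supporting evidence for the conjecture and as seed graphs for the inductive constructions alluded to in the introduction, not as steps toward a general proof.
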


Results in~\cite{BarrusSinkovic15} also show how critical, 1-unique graphs may be used to inductively construct larger critical graphs with a prescribed tree-depth. In order to begin, it is necessary to have identified some critical, 1-unique graphs. The purpose of this note is to present a few classes of such graphs.

In the following, let $V(G)$ denote the vertex set of a graph $G$. The \emph{order} of $G$ is given by $|V(G)|$. Given a vertex $v$ of $G$, let $G-v$ denote the graph resulting from the deletion of $v$. Similarly, given a set $S \subseteq V(G)$ or an edge $e$ of $G$, let $G-S$ and $G-e$ respectively denote the graph obtained by deleting all vertices in $S$ or by deleting edge $e$ from $G$. We indicate the disjoint union of graphs $G$ and $H$ by $G+H$, and we indicate a disjoint union of $k$ copies of $G$ by $kG$. We use $\langle p_1,\dots,p_k\rangle$ to denote a path from $p_1$ to $p_k$, with vertices listed in the order the path visits them; the \emph{length} of such a path is $k-1$, the number of its edges. We use $K_n$, $P_n$, and $C_n$ to denote the complete graph, path, and cycle with $n$ vertices, respectively.

\section{Families of $1$-unique Critical Graphs} \label{sec:families}

We now present some families of graphs that are both critical and $1$-unique. We recall a few results from~\cite{BarrusSinkovic15}, as well as facts about tree-depths of paths and cycles.

A vertex $v$ in a graph $G$ is \emph{1-unique} if there exists an optimal ranking of $G$ where $v$ is the unique vertex with rank 1.  If $v$ is any vertex in a graph $G$, a \emph{star-clique transform at $v$} removes $v$ from $G$ and adds edges as necessary so that $N_G(v)$ becomes a clique.

\begin{theorem}[\cite{BarrusSinkovic15}]\label{thm:starclique} Let $v$ be a vertex of a graph $G$, and let $H$ be the graph obtained through the star-clique transform at $v$ of $G$.  Vertex $v$ is $1$-unique in $G$ if and only if $\td(H)<\td(G)$.
\end{theorem}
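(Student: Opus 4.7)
The plan is to prove both implications by transferring rankings between $G$ and $H$, establishing a correspondence between optimal rankings of $G$ in which $v$ is the unique vertex labeled $1$ and rankings of $H$ using at most $\td(G)-1$ labels.

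For the forward direction, I would start with an optimal ranking $c$ of $G$ in which $v$ uniquely carries the label $1$, and show that the restriction $c'$ of $c$ to $V(H)$, shifted down by $1$, is a ranking using at most $\td(G)-1$ values, giving $\td(H) < \td(G)$. The only nontrivial step is verifying the ranking property. Given a path $P$ in $H$ between vertices $u, w$ with $c'(u) = c'(w)$, whenever $P$ traverses an edge $xy$ that belongs to $H$ but not to $G$, both $x$ and $y$ lie in $N_G(v)$, so I can replace $xy$ by the two-edge detour $x,v,y$. Applying this edge by edge yields a walk from $u$ to $w$ in $G$, which contains a $u$-$w$ path $Q$ in $G$; the ranking property of $c$ applied to $Q$ produces a vertex of label strictly greater than $c(u)$, and since $v$ carries the smallest label this witness cannot be $v$, so it lies in $V(H)$ and gives the required $c'$-witness.

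For the reverse direction, I would take a ranking $c_H$ of $H$ using labels $\{1,\dots,\td(G)-1\}$, shift every label up by $1$, and assign $v$ the label $1$ to obtain a labeling $c_G$ of $G$. Since $c_G$ uses at most $\td(G)$ labels and $v$ is uniquely labeled $1$ in it, it suffices to check $c_G$ is a ranking. Take any $u$-$w$ path $P$ in $G$ with $u, w \in V(H)$ and $c_G(u) = c_G(w)$. If $P$ avoids $v$, then $P$ lies in $H$ and $c_H$ immediately provides the witness. If instead $P$ passes through $v$ with on-path neighbors $p_{a-1}$ and $p_{a+1}$, both lie in $N_G(v)$ and hence are adjacent in $H$ by the star-clique construction, so replacing $p_{a-1}, v, p_{a+1}$ by $p_{a-1}, p_{a+1}$ produces a shorter $u$-$w$ path in $H$, and $c_H$ again supplies the desired high-label vertex.

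The difficulty is not conceptual but rather one of bookkeeping: I must ensure that the "new" edges of $H$ can be lifted to two-edge walks through $v$ in $G$ in the forward direction, and conversely that any detour of a $G$-path through $v$ can be bypassed using the clique on $N_G(v)$ in $H$ in the reverse direction. In both cases the key point is that $v$ carries the smallest label, so $v$ itself can never be the higher-labeled witness on a path whose endpoints share any other label.
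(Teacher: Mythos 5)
Your argument is correct: both directions are carried out by the natural ranking-transfer correspondence (shift labels down and reroute new clique edges through $v$ in one direction; shift up, label $v$ with $1$, and shortcut any passage through $v$ across the clique on $N_G(v)$ in the other), and the key observation that $v$, being uniquely labeled $1$, can never be the required higher-labeled witness is exactly what makes both verifications go through. Note that this paper only quotes Theorem~\ref{thm:starclique} from~\cite{BarrusSinkovic15} without proof, so there is no internal proof to compare against; your proof is the standard one and matches the approach of the cited source.
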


\begin{theorem}[\cite{BarrusSinkovic15}]\label{thm:conjecture} If $G$ is 1-unique and has tree-depth $k$, and deleting any edge of $G$ results in a graph with a lower tree-depth, then $G$ is $k$-critical.
\end{theorem}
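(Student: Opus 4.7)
The plan is to prove $G$ is $k$-critical by showing that every proper minor of $G$ has tree-depth strictly less than $k$. Since any proper minor arises from a finite sequence of vertex deletions, edge deletions, and edge contractions, and since tree-depth is monotone under the minor relation, it is enough to verify this for the three types of one-step minors: $G-v$ for a vertex $v$, $G-e$ for an edge $e$, and $G/e$ for an edge $e$.

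Edge deletions are handled directly by hypothesis. For vertex deletion, I would fix an arbitrary $v \in V(G)$; since $G$ is $1$-unique, there is an optimal ranking $\phi$ of $G$ in which $v$ is the unique vertex labeled $1$. Restricting $\phi$ to $V(G)\setminus\{v\}$ uses only labels from $\{2,\dots,k\}$, and decrementing each label by one yields a valid $(k-1)$-ranking of $G-v$, so $\td(G-v)\le k-1<k$.

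The remaining case, edge contraction, is where I would lean on Theorem~\ref{thm:starclique}. Given an edge $e=uv$, since $u$ is $1$-unique in $G$, the star-clique transform $H$ of $G$ at $u$ satisfies $\td(H)<k$. The key observation to pin down is that $G/e$ is a spanning subgraph of $H$: after identifying the contracted vertex of $G/e$ with $v$, both graphs have vertex set $V(G)\setminus\{u\}$, the neighborhood of $v$ in each is $(N_G(u)\cup N_G(v))\setminus\{u,v\}$, and on $V(G)\setminus\{u,v\}$ the edge set of $G/e$ is exactly $E(G)$, while $H$ contains in addition the edges of the clique formed on $N_G(u)\setminus\{v\}$. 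Thus $\td(G/e)\le\td(H)<k$.

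The main obstacle is the edge-contraction step, and specifically the identification of $G/e$ as a spanning subgraph of the star-clique transform at $u$; once that comparison of neighborhoods is carried out carefully, Theorem~\ref{thm:starclique} finishes the argument immediately. The vertex-deletion and edge-deletion steps are essentially bookkeeping.
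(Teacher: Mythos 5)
Your proof is correct, and each step checks out: edge deletions are the hypothesis, vertex deletions follow from $1$-uniqueness by deleting the unique vertex labeled $1$ and shifting the remaining labels down (the same device this paper uses later for $R_{k,t}$), and for a contraction of $e=uv$ the identification of $G/e$ as a spanning subgraph of the star-clique transform at $u$ is verified accurately, so Theorem~\ref{thm:starclique} together with minor-monotonicity of $\td$ applied to one-step minors finishes the argument. Note that this paper only cites the theorem from~\cite{BarrusSinkovic15} without reproducing a proof, so there is no in-paper argument to compare against; your route is precisely the natural one suggested by the paper's toolkit.
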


\begin{lemma}[\cite{path,cycle}]\label{lem:pathscycles}
For $n \geq 1$, \[\td(P_n)=\lfloor \log_2 n\rfloor +1,\] and for $n \geq 3$, 
\[\td(C_n)=\lfloor \log_2 (n-1)\rfloor +2.\]
\end{lemma}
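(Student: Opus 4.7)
The plan is to establish the path formula first and then derive the cycle formula from it; each piece proceeds by matching an explicit construction (upper bound) with a structural lower bound based on the location of the maximum label.

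For the upper bound on $\td(P_n)$, I would build a ranking recursively. Take a central vertex $v$ of $P_n$, assign it the top label, and rank each of the two remaining subpaths (of orders at most $\lceil(n-1)/2\rceil$) with strictly smaller labels, using the recursion. Every path between vertices in opposite halves must cross $v$, and within each half the recursion guarantees validity, so the whole labeling is a valid ranking. This yields $\td(P_n)\le \td(P_{\lceil (n-1)/2\rceil})+1$.

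For the lower bound, the key fact is that in any ranking of a connected graph, at most one vertex carries the maximum label: two such vertices would force a strictly larger label on the path between them, which cannot exist. Applied to any optimal ranking of $P_n$, the unique top-labeled vertex $v$ partitions $P_n-v$ into at most two subpaths whose orders sum to $n-1$, and the larger part has at least $\lceil (n-1)/2\rceil$ vertices and inherits a valid ranking using one fewer label. Using monotonicity of $\td(P_m)$ in $m$, we obtain $\td(P_n)\ge \td(P_{\lceil(n-1)/2\rceil})+1$. Combining the two bounds gives the recurrence $f(n)=f(\lceil(n-1)/2\rceil)+1$ with $f(1)=1$, which I would then verify has closed form $\lfloor\log_2 n\rfloor+1$ by a short induction resting on the identity $\lfloor\log_2\lceil(n-1)/2\rceil\rfloor+1=\lfloor\log_2 n\rfloor$.

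The cycle formula reduces to the path formula by the same two-sided reasoning. For the upper bound, delete any vertex $v$ of $C_n$ to get $P_{n-1}$, rank it optimally, and give $v$ a fresh top label; this shows $\td(C_n)\le \td(P_{n-1})+1$. For the lower bound, the uniqueness of the top-labeled vertex again applies: deleting it from $C_n$ leaves $P_{n-1}$ with a valid ranking on one fewer label, so $\td(C_n)\ge \td(P_{n-1})+1$. Substituting the path formula yields $\td(C_n)=\lfloor\log_2(n-1)\rfloor+2$. The only genuine obstacle I anticipate is the bookkeeping required to confirm that the proposed closed form really solves the path recurrence for every parity of $n$; the graph-theoretic content itself is essentially immediate from the uniqueness of the maximum label in a connected graph.
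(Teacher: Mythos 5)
Your argument is correct, and it is worth noting that the paper itself offers no proof of this lemma: it is imported from the literature (the cited references on vertex ranking of paths and cycles), so there is no in-paper argument to compare against. Your proposal is a sound self-contained derivation. The two pillars are both valid: (i) in any ranking of a connected graph the maximum label actually used occurs on exactly one vertex (and in an optimal $k$-ranking that label must be $k$, else the ranking would already be a $(k-1)$-ranking), which gives the lower bounds $\td(P_n)\ge \td(P_{\lceil (n-1)/2\rceil})+1$ and $\td(C_n)\ge \td(P_{n-1})+1$ via restriction of the ranking to the induced subpath and monotonicity of $\td(P_m)$ in $m$; and (ii) the matching constructions (top label on a central vertex of $P_n$, respectively a fresh top label on one vertex of $C_n$ — noting that of the two cycle paths joining a repeated label, one passes through the top-labeled vertex and the other lies in the ranked $P_{n-1}$). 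The arithmetic step you flag as the only risk, $\lfloor\log_2\lceil (n-1)/2\rceil\rfloor+1=\lfloor\log_2 n\rfloor$ for $n\ge 2$, does hold in both parities (write $2^m\le n\le 2^{m+1}-1$; then $2^{m-1}\le\lceil (n-1)/2\rceil\le 2^m-1$), so the recurrence $f(n)=f(\lceil (n-1)/2\rceil)+1$, $f(1)=1$, indeed has the stated closed form, and the cycle formula follows by substitution.
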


\begin{lemma} \label{lem:families}
For all integers $k \geq 1$, the graphs $K_k$, $C_{2^{k-2}+1}$, and $P_{2^{k-1}}$ are both $k$-critical and $1$-unique.
\end{lemma}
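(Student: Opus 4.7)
My plan is to treat all three families by the same recipe: use Lemma~\ref{lem:pathscycles} (or a direct count, in the clique case) to confirm the tree-depth is exactly $k$; apply Theorem~\ref{thm:starclique} to each vertex individually to establish $1$-uniqueness; and verify that deleting any edge lowers the tree-depth, so that Theorem~\ref{thm:conjecture} then yields $k$-criticality. This splits the work into three almost mechanical checks per family.

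For $K_k$, the tree-depth is $k$ since every two vertices are adjacent. The star-clique transform at any vertex $v$ simply deletes $v$, because $N(v)$ is already complete, leaving $K_{k-1}$, which has tree-depth $k-1$; hence every vertex is $1$-unique. Deleting any edge $uv$ leaves a graph rankable by assigning $u$ and $v$ the common label $1$ and the remaining vertices distinct labels from $\{2,\dots,k-1\}$, so $\td(K_k-uv)=k-1<k$, and Theorem~\ref{thm:conjecture} closes the case.

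For $P_n$ with $n=2^{k-1}$ and $C_n$ with $n=2^{k-2}+1$, Lemma~\ref{lem:pathscycles} supplies tree-depth $k$ in each case. The key observation is that a star-clique transform at any vertex of a path or cycle simply deletes the vertex and, where needed, joins its two former neighbors by a new edge, producing $P_{n-1}$ or $C_{n-1}$ respectively. For $n=2^{k-1}$ this gives $\td(P_{n-1})=\lfloor\log_2(2^{k-1}-1)\rfloor+1=k-1$, and for $n=2^{k-2}+1$ it gives $\td(C_{n-1})=\lfloor\log_2(2^{k-2}-1)\rfloor+2=k-1$; so $1$-uniqueness follows from Theorem~\ref{thm:starclique}. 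Finally, deleting an edge of $P_{2^{k-1}}$ yields two disjoint paths each on fewer than $2^{k-1}$ vertices, hence of tree-depth at most $k-1$, and deleting an edge of $C_{2^{k-2}+1}$ yields $P_{2^{k-2}+1}$ with tree-depth $(k-2)+1=k-1$. Theorem~\ref{thm:conjecture} again delivers $k$-criticality.

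The proof has no real obstacle beyond bookkeeping, but one should be careful at small $k$: for $k\le 2$ the cycle $C_{2^{k-2}+1}$ is not a simple graph and must be excluded or reinterpreted, and for $k=1$ all three families collapse to $K_1$, for which the claim is immediate. Once those boundary cases are set aside, the floor-of-log arithmetic used above is routine and the three verifications go through in parallel.
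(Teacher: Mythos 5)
Your argument is correct, but its logical organization is the reverse of the paper's, so a brief comparison is in order. You establish $1$-uniqueness first, by explicitly computing the star-clique transform (observing that at any vertex of a path or cycle it produces $P_{n-1}$ or $C_{n-1}$, and at a vertex of $K_k$ it produces $K_{k-1}$) and checking via Lemma~\ref{lem:pathscycles} that the tree-depth drops; you then obtain $k$-criticality from Theorem~\ref{thm:conjecture}, which only requires verifying that single edge deletions lower the tree-depth. The paper instead proves criticality directly from Lemma~\ref{lem:pathscycles}, noting that both deleting \emph{and contracting} any edge of $P_{2^{k-1}}$ or $C_{2^{k-2}+1}$ lowers the tree-depth (and that $K_k$ is trivially critical), and then gets $1$-uniqueness essentially for free: at a vertex of degree $1$ or $2$ the star-clique transform yields a proper minor, so in a critical graph Theorem~\ref{thm:starclique} applies with no further computation (for $K_k$ the paper just uses symmetry of rankings). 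The two routes involve the same arithmetic, since contracting an edge at a degree-$2$ vertex coincides with the star-clique transform there; the paper's order of deduction yields the reusable observation that low-degree vertices of critical graphs are automatically $1$-unique and avoids invoking Theorem~\ref{thm:conjecture}, while yours trades the contraction checks for an appeal to that theorem. Your caveat about small $k$ (e.g.\ $C_{2^{k-2}+1}$ being degenerate for $k\le 2$, and the transform of $C_3$ being $K_2$ rather than a cycle) is legitimate and is glossed over by the paper as well; it does not affect correctness.
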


\begin{proof}
Fix a natural number $k$. It is easy to see that $K_k$ is $k$-critical.  Since each vertex receives a different color in any ranking, by symmetry each vertex is $1$-unique.

Looking at the values of $\td(P_n)$ and $\td(C_n)$ in Lemma~\ref{lem:pathscycles}, we see that deleting or contracting any edge of $C_{2^{k-2}+1}$ or $P_{2^{k-1}}$ lowers its tree-depth from $k$.  It follows from Theorem~\ref{thm:starclique} that a vertex of degree 1 or 2 in a critical graph is $1$-unique. Thus $C_{2^{k-2}+1}$ and $P_{2^{k-1}}$ are $1$-unique.
\end{proof}

We now present another family of 1-unique critical graphs. For each $k \geq 3$ and $t$ such that $0 \leq t \leq 2^{k-2}-2$, let $R_{k,t}$ be the graph obtained by taking a path with $2^{k-2}+1+t$ vertices and adding an edge between the two vertices at distance $t$ from the endpoints. As illustrated in Figure \ref{fig:Umt}, $R_{k,t}$ contains a cycle with $2^{k-2}+1-t$ vertices and two attached paths of length $t$; note that $R_{k,0}=C_{2^{k-2}+1}$.

\begin{figure}[htb]
       \center{\includegraphics[scale=0.6]{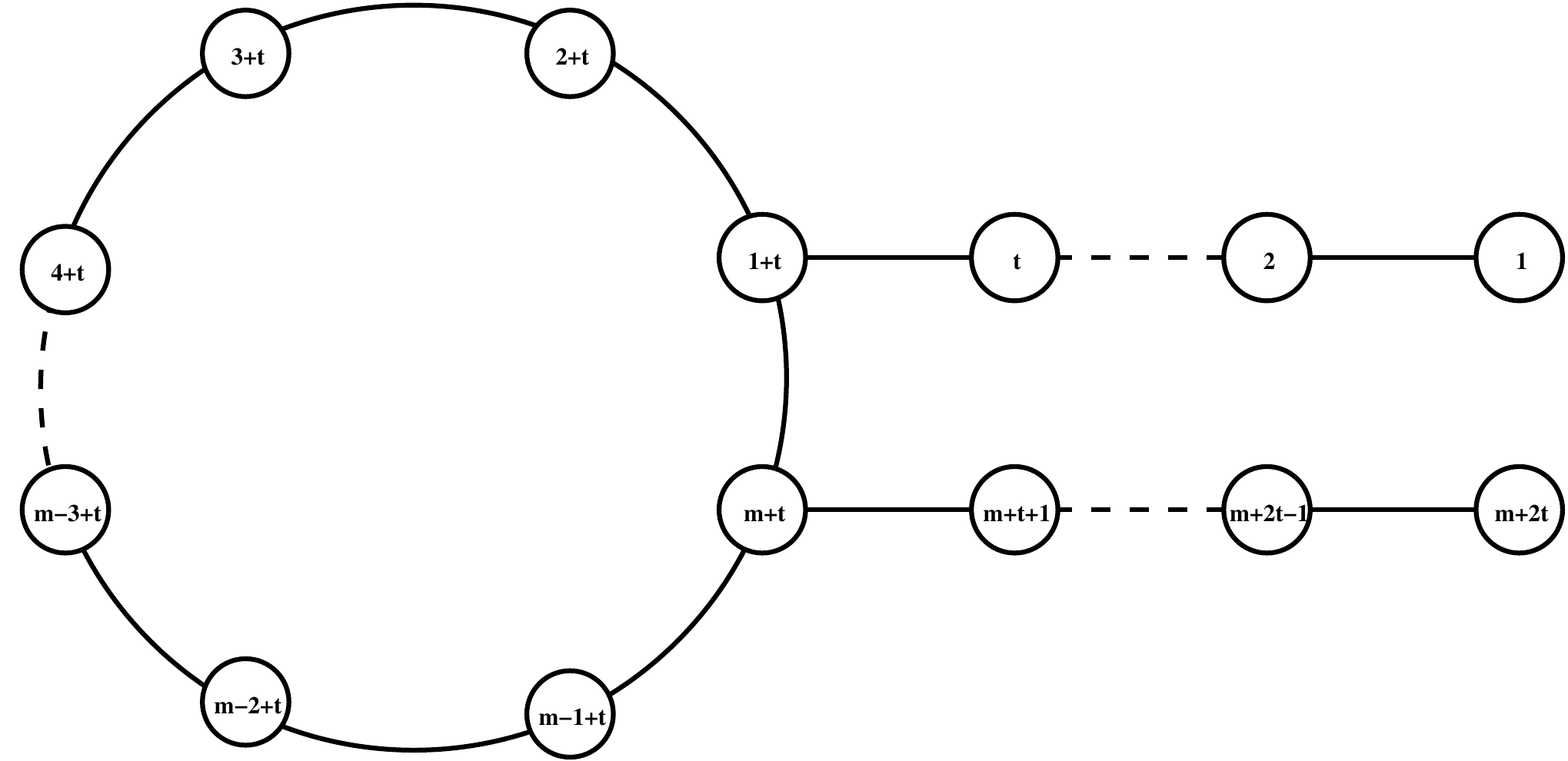}}
       \caption{\label{fig:Umt} The graph $R_{k,t}$}
\end{figure}

\begin{lemma}
For integers $k \geq 3$ and $t$ such that $0 \leq t \leq 2^{k-2}-2$, the graph $R_{k,t}$ is $k$-critical and $1$-unique.
\end{lemma}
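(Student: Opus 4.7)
The plan is to verify the three hypotheses needed to apply Theorem~\ref{thm:conjecture} to $G=R_{k,t}$: that $\td(R_{k,t})=k$, that every vertex of $R_{k,t}$ is $1$-unique, and that deleting any edge of $R_{k,t}$ lowers its tree-depth. Criticality will then follow. For the upper bound $\td(R_{k,t})\leq k$, I would rank $v_{t+1}$ with label $k$; then $R_{k,t}-v_{t+1}=P_t+P_{2^{k-2}}$, which by Lemma~\ref{lem:pathscycles} has tree-depth $k-1$ (using $t\leq 2^{k-2}-2$), so extending an optimal ranking of this forest yields a $k$-ranking.

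For the lower bound $\td(R_{k,t})\geq k$, I would use $\td(G)=1+\min_v\td(G-v)$ for connected $G$ and show $\td(R_{k,t}-v)\geq k-1$ for every $v$. When $v\in\{v_{t+1},v_{2^{k-2}+1}\}$ this is immediate from the preceding paragraph. When $v$ lies on a pendant path but is not a chord endpoint, $R_{k,t}-v$ still contains one of the subpaths $v_{t+1},\dots,v_N$ or $v_1,\dots,v_{2^{k-2}+1}$ on $2^{k-2}+1$ vertices, and Lemma~\ref{lem:pathscycles} gives $\td\geq k-1$. The remaining case is $v=v_j$ with $t+2\leq j\leq 2^{k-2}$, in which $R_{k,t}-v_j$ is a tree whose two branch vertices $v_{t+1}$ and $v_{2^{k-2}+1}$ are joined by the chord and each carry two pendant paths. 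If $j\in\{t+2,\,2^{k-2}\}$ the tree still contains a subpath on $2^{k-2}$ vertices (obtained by combining one pendant with the chord and the remaining portion of the longer arc). For intermediate $j$, the longest path can have fewer than $2^{k-2}$ vertices, so Lemma~\ref{lem:pathscycles} does not suffice; here I plan a second application of the recursive identity to the tree, verifying by case analysis on the four pendant lengths at the two branch vertices that removing any single vertex leaves a subgraph of tree-depth at least $k-2$. The hard part will be this intermediate case: the lower bound cannot come from a single long subpath and must instead be forced by the double-branch-vertex structure.

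For $1$-uniqueness, Theorem~\ref{thm:starclique} reduces the task to showing that the star-clique transform at every vertex strictly decreases the tree-depth. Vertices of degree $1$ and $2$ are handled as in the proof of Lemma~\ref{lem:families}. The only substantive case is a chord endpoint, say $v_{t+1}$: the transform makes $\{v_t,v_{t+2},v_{2^{k-2}+1}\}$ into a triangle, and I would verify $\td(H)\leq k-1$ for the resulting graph $H$ by ranking $v_{2^{k-2}+1}$ on top, after which $H$ decomposes as $P_{2^{k-2}-1}+P_t$ with tree-depth at most $k-2$ by Lemma~\ref{lem:pathscycles}. Finally, for the edge-deletion step I would treat the three edge types in turn: deleting the chord yields $P_N$ with $N\leq 2^{k-1}-1$ and tree-depth $k-1$; deleting a pendant-path edge separates a short path from a cycle with asymmetric pendants, whose tree-depth I would bound by ranking a chord endpoint at the top; and deleting a longer-arc edge yields a spider with three legs attached at one of the chord endpoints, whose tree-depth I would bound by rooting at that branch vertex and checking each leg's length against Lemma~\ref{lem:pathscycles}. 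With these three hypotheses in place, Theorem~\ref{thm:conjecture} delivers the $k$-criticality and completes the proof.
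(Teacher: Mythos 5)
The heart of this lemma is the lower bound $\td(R_{k,t})\geq k$, and that is exactly where your plan has a genuine gap. In the ``intermediate $j$'' case you propose one more round of the recursion $\td(G)=1+\min_v\td(G-v)$, hoping that a case analysis on the four pendant lengths will show that deleting any second vertex $u$ from the tree $T=R_{k,t}-v_j$ leaves tree-depth at least $k-2$. But your only available certificate at that level is Lemma~\ref{lem:pathscycles}, i.e.\ a path of order $2^{k-3}$, and such a path need not exist. For example, with $t=1$ and $k\geq 7$, choose $j$ so that the two arc-legs of $T$ have orders $a=2^{k-4}+2^{k-3}$ and $b=2^{k-4}-3$, and delete the vertex $u$ of the $a$-leg at distance $2^{k-4}+1$ from the chord endpoint: the components of $T-u$ are $P_{2^{k-3}-1}$ and a double spider whose longest path has $2^{k-3}-1$ vertices, so no component contains $P_{2^{k-3}}$, yet its tree-depth must still be certified to be at least $k-2$. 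In fact the depth of recursion needed grows with $k$, so a fixed two-level argument cannot close this case. The paper overcomes precisely this obstacle with a different idea: setting $p=1+\lfloor\log_2(t+1)\rfloor$, it exhibits $2^{k-2-p}$ disjoint subpaths of order $2^p$ avoiding the deleted vertex, linked consecutively by surviving cycle/chord edges, and then invokes \cite[Lemma 2]{DGT} (joining two $t$-critical graphs by an edge gives a $(t+1)$-critical graph) repeatedly, doubling up the paths to reach tree-depth $(p+1)+(k-2-p)=k-1$. Some tool of this kind (or an induction with a much stronger hypothesis about double spiders) is missing from your proposal.

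Two smaller problems: your star-clique computation at a chord endpoint is wrong as stated, because the transform at $v_{t+1}$ adds the edge $v_tv_{t+2}$, which splices the length-$t$ pendant onto the broken cycle; after ranking $v_{2^{k-2}+1}$ on top the long component is $P_{2^{k-2}+t-1}$, not $P_{2^{k-2}-1}$, and the bound fails for $t\geq 1$. (The paper avoids this entirely by building an explicit optimal ranking: delete the farther degree-$3$ vertex $y$, use $1$-uniqueness of $P_{2^{k-2}}$ for the path containing the chosen vertex, colors $2,\dots,k-1$ on the other path, and color $k$ on $y$.) Also, handling degree-$1$ and degree-$2$ vertices ``as in Lemma~\ref{lem:families}'' is circular here: that argument deduces $1$-uniqueness from criticality, which for $R_{k,t}$ is the very thing you intend to obtain afterwards from Theorem~\ref{thm:conjecture}; you would instead have to verify directly that each such star-clique transform lowers the tree-depth (this is doable, but it is not in your write-up).
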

\begin{proof}
Let $m=2^{k-2}+1-t$.  Then $R_{k,t}$ can be thought as a cycle of order $m$ with two pendant paths of length $t$.  By Lemma~\ref{lem:families} it suffices to assume that $t\geq 1$ and hence $m \leq 2^{k-2}$.

We show that $\td(R_{k,t})\geq k$ by showing that deleting any vertex $x$ leaves a subgraph with tree-depth at least $k-1$. Now $R_{k,t}$ has a unique path of order $m+2t$. Denote its vertices $1,\ldots, m+2t$ in order.  If $x$ is not a vertex of degree 2 on the cycle, then $x\in \{1,\ldots, t+1\}\cup\{m+t,\ldots, m+2t\}$.  Thus $R_{k,t}-x$ has a path of order $t+m-1=2^{k-2}$, which has tree-depth $k-1$.  Suppose henceforth that $t+1<x<m+t$.  If $t+1\geq 2^{k-3}$,  then $R_{k,t}-x$ contains a path of order $2(t+1)\geq 2^{k-2}$, so we also assume that $t+1<2^{k-3}$.

Let $p=1+\lfloor \log_2(t+1)\rfloor$; then $2^{p-1}\leq t+1<2^p$.  We will exhibit a collection of $2^{k-2-p}$ paths of order $2^p$, none of which contain $x$, that together induce a tree with tree-depth $k-1$.

If $x\equiv a \mod 2^p$ where $1\leq a\leq t+1$, then we begin the collection with the two paths $\langle 1,\ldots, 2^p\rangle$ and $\langle m+2t-2^p+1. \dots, m+2t\rangle$.   Suppose that we add to this collection $i$ paths of order $2^p$ covering vertices $2^p+1,\ldots, 2^p+i2^p$ and $2^{k-2-p}-2-i$ paths of order $2^p$ covering vertices $m+2t-2^p-(2^{k-2-p}-2-i)2^p+1,\ldots, m+2t-2^p$. Since $m+2t-2^p-(2^{k-2-p}-2-i)2^p+1 = t+2 + 2^p + i2^p$, the vertices of the cycle not covered by one of these paths are $1+(i+1)2^p, \ldots, t+1+(i+1)2^p$.  Clearly we may choose $i$ so that $x$ is one of these uncovered vertices.

If $x\equiv a \mod 2^p$ where $t+2\leq a\leq 2^p$, then we begin our collection of paths of order $2^p$ with the path $\langle 1,\ldots, t+1, m+t, \ldots, m+2^p-2\rangle$.   Suppose to the collection we add $i$ paths of order $2^p$ covering vertices $t+2,\ldots, t+1+i2^p$ and $2^{k-2-p}-1-i$ paths of order $2^p$ covering vertices $m+t-(2^{k-2-p}-1-i)2^p,\ldots, m+t-1$.  Since $m+t-(2^{k-2-p}-1-i)2^p = 1+2^p+i2^p$, the vertices of the cycle not covered by one of these paths are $t+2+i2^p,\ldots, 2^p+i2^p$.  Clearly we may choose $i$ so that $x$ is one of these uncovered vertices.

Let $\mathcal{C}$ denote our collection of $2^{k-2-p}$ paths of order $2^p$ that avoid $x$. By~\cite[Lemma 2]{DGT}, for any $t \geq 1$, adding a single edge joining two $t$-critical graphs creates a $(t+1)$-critical graph. Note that each path in $\mathcal{C}$ is joined to one or two other paths in $\mathcal{C}$ by edges of the cycle from $R_{k,t}$; let $\mathcal{E}$ denote the edges in $R_{k,t}-x$ that join vertices in distinct elements of $\mathcal{C}$. Since the cycle in $R_{k,t}$ is reduced to a path in $R_{k,t} - x$, these cycle edges naturally assign a linear order to the paths in $\mathcal{C}$. Joining the first two of these paths with the appropriate edge from $\mathcal{E}$ creates a tree with tree-depth $p+2$, as does joining the third and fourth, fifth and sixth, and so on. Further adding the edge from $\mathcal{E}$ joining the second and third (sixth and seventh, etc.) trees creates an induced subgraph with tree-depth $p+3$. Continuing in this way, because the number of paths in $\mathcal{C}$ is $2^{k-2-p}$, adding to the paths in $\mathcal{C}$ all edges from $\mathcal{E}$ results in an induced subgraph of $R_{k,t}-x$ with tree-depth $(p+1)+(k-2-p) = k-1$, as claimed. Thus $\td(R_{k,t}) \geq k$.

Deleting a vertex of degree 3 yields two paths of orders $m+t-1$ and $t$.  By our assumptions on $m$ and the definition of $t$, we know that $m+t-1=2^{k-2}$ and $t\leq 2^{k-2}-2$, so these two paths have tree-depth less than or equal to $k-1$. Thus $\td(R_{k,t})\leq k$.

If $x$ is any vertex, then $x$ lies on a path of order $2^{k-2}$ when the farther vertex of degree 3 (call it $y$) is deleted.   By Lemma~\ref{lem:families}, there is a ranking of $P_{2^{k-2}}$ with $k-1$ colors such that vertex $x$ is the unique vertex with color 1.  The other path in $R_{k,t}-x$ has tree-depth at most $k-2$, so it has a ranking using colors $2,\ldots, k-1$.
Use the colors on these paths for the corresponding vertices of $R_{k,t}$, and  assign color $k$ to the vertex $y$.   The result is an optimal ranking of $R_{k,t}$ where $x$ is the unique vertex colored $1$.  Hence $R_{k.t}$ is $1$-unique.

To show that $R_{k,t}$ is critical, by Theorem~\ref{thm:conjecture} it suffices to show that the tree-depth decreases when any edge is deleted.   Deleting any edge $e$ on the cycle of $R_{k,t}$ leaves a tree.  By~\cite[Theorem 2]{DGT} and induction, all $k$-critical trees have order $2^{k-1}$.   Since $R_{k,t}-e$ has order $m+2t=2^{k-1}+2-m$ and $m\geq 3$, $\td(R_{k,t}-e)<k$.  Deleting an edge not on the cycle yields two components, each of which is a proper induced subgraph of $R_{k,t}$.  Since $R_{k,t}$ is $1$-unique, deleting any vertex of $R_{k,t}$ results in a graph with a lower tree-depth (since we may obtain a $(k-1)$-ranking by taking a $k$-ranking where the chosen vertex is the unique vertex receiving label 1, deleting that vertex, and dropping the labels on all remaining vertices by 1). Hence any proper subgraph has tree-depth less than $k$; it follows that $\td(R_{k,t}-e)<k$.
\end{proof}

Before describing our final class of 1-unique critical graphs, we recall a theorem from~\cite{BarrusSinkovic15} that will be used many times. Given a graph $G$ and a subset $S$ of its vertex set, let $G\langle S \rangle$ denote the graph with vertex set $S$ in which vertices $u$ and $v$ are adjacent if they are adjacent in $G$ or if some component of $G-S$ has a vertex adjacent to $u$ and a vertex adjacent to $v$.

\begin{theorem}[\cite{BarrusSinkovic15}]\label{thm:newbound}
If $G$ is a graph, then \[\td(G) = \min_{S \subseteq V(G)} \left(\td(G\langle S \rangle) + \td(G-S)\right).\] Furthermore, $\td(G) = \td(G\langle T \rangle) + \td(G-T)$ if and only if there exists an optimal ranking of $G$ in which the vertices in $T$ receive higher colors than the vertices outside $T$.
\end{theorem}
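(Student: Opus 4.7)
The plan is to prove the two inequalities between $\td(G)$ and $\min_S(\td(G\langle S\rangle) + \td(G-S))$ separately, and then to extract the equality characterization from the constructions used. Both inequalities rest on a translation between $G$-paths and $G\langle S\rangle$-walks: a path in $G$ whose endpoints lie in $S$ induces a walk in $G\langle S\rangle$ on its $S$-vertices in order of appearance (consecutive $S$-vertices on the path are either $G$-adjacent, or separated by a subpath entirely inside one component of $G-S$, which makes them adjacent in $G\langle S\rangle$ by definition), and each edge of $G\langle S\rangle$ expands back to a $G$-path whose internal vertices, if any, lie in a single component of $G-S$.

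For the upper bound $\td(G) \leq \td(G\langle S\rangle) + \td(G-S)$ valid for every $S$, I would combine an optimal ranking of $G-S$ using labels $1,\ldots,\td(G-S)$ with an optimal ranking of $G\langle S\rangle$ using labels $\td(G-S)+1,\ldots,\td(G-S)+\td(G\langle S\rangle)$. Given a $G$-path $P$ between two equally labeled vertices $u,v$, the cases are: (i) $u,v\in V(G)\setminus S$ and $P$ avoids $S$, handled by the ranking of the relevant component of $G-S$; (ii) $u,v\in V(G)\setminus S$ and $P$ meets $S$, in which case any $S$-vertex on $P$ already has strictly larger label than $u$ and $v$; and (iii) $u,v\in S$, in which case the $S$-vertices of $P$ form a walk in $G\langle S\rangle$ from $u$ to $v$, and the higher-labeled witness guaranteed by the ranking of $G\langle S\rangle$ lies on $P$.

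For the lower bound, I would take an optimal $k$-ranking of $G$ where $k=\td(G)$, pick any threshold $c$, and let $S$ be the set of vertices with label greater than $c$. Restricting the ranking to $V(G)\setminus S$ shows $\td(G-S)\leq c$. To see $\td(G\langle S\rangle)\leq k-c$, I would take two $S$-vertices with equal shifted label and a path in $G\langle S\rangle$ between them, expand each edge to its $G$-realization to obtain a $G$-walk between them, and observe that the higher-labeled witness provided by the $G$-ranking cannot lie outside $S$ (those vertices have label at most $c$), so it lies on the original $G\langle S\rangle$-path. Summing gives $\td(G\langle S\rangle)+\td(G-S)\leq k$. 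The ``furthermore'' claim then drops out of these two constructions: if $\td(G\langle T\rangle)+\td(G-T)=\td(G)$, the upper-bound construction applied to $T$ yields an optimal ranking that puts every vertex of $T$ strictly above every vertex outside $T$; conversely, given an optimal ranking of $G$ in which $T$-vertices carry the higher labels, applying the lower-bound argument with $c$ equal to the maximum label on $V(G)\setminus T$ gives $\td(G\langle T\rangle)+\td(G-T)\leq \td(G)$, and the always-valid upper-bound inequality forces equality. The main delicacy is the $G\langle S\rangle$-to-$G$ expansion in the lower bound, where one must ensure the higher-labeled witness is forced back into $S$ by the choice of threshold, and one must accept walks rather than strictly simple paths in the ranking verification, which is harmless since any walk between equally labeled vertices contains a path between them among its vertices, and that path carries the required higher-labeled vertex.
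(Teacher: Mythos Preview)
The paper does not prove this theorem; it is quoted from \cite{BarrusSinkovic15} and used as a black box, so there is no in-paper argument to compare against. Your proposal is a correct and standard proof: the upper bound via layering a ranking of $G\langle S\rangle$ above a ranking of $G-S$, the lower bound via thresholding an optimal ranking of $G$, and the two directions of the ``furthermore'' clause all go through as you describe, with the walk-versus-path issue handled correctly since the $S$-vertices on any expanded $G$-walk are exactly the vertices of the original $G\langle S\rangle$-path.
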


Consider now a graph $Q$ constructed in the following way: Given $k \geq 1$ and $s \in \{1,\dots,k\}$, let $H_0$ be a complete graph with $s$ vertices, and for some $q \in \{1,\dots,s\}$, and let $H_1, \dots, H_q$ be vertex-disjoint complete graphs, each with $k-s$ vertices. Given a partition $\pi_1+\dots+\pi_q$ of $s$ into positive integers, choose a partition $B_1,\dots,B_q$ of the vertices of $H_0$ so that $|B_i|=\pi_i$ for all $i$, and form $Q$ by adding to the disjoint union $H_0+H_1+\dots+H_q$ all possible edges between vertices in $B_i$ and vertices of $H_i$, for all $i$.

Call the class of all graphs built in this way $\mathcal{Q}_{k,s}$. Note that $\mathcal{Q}_{k,s}$ has as many nonisomorphic graphs as there are integer partitions of $s$.

\begin{example} \rm Since there are seven integer partitions of 5, the class $\mathcal{Q}_{7,5}$ contains seven graphs. We illustrate two of them in Figure~\ref{fig:Q7,5}. There the three pairs of outermost vertices in each graph comprise the vertex sets of $H_1, H_2, H_3$, while the five innermost vertices form $B_1\cup B_2\cup B_3$.  For the graph on the left, $|B_1|=|B_2|=2$ and $|B_3|=1$.  For the graph on the right, $|B_1|=|B_2|=1$ and $|B_3|=3$. %
\begin{figure}[htb]
       \center{\includegraphics[scale=0.3]{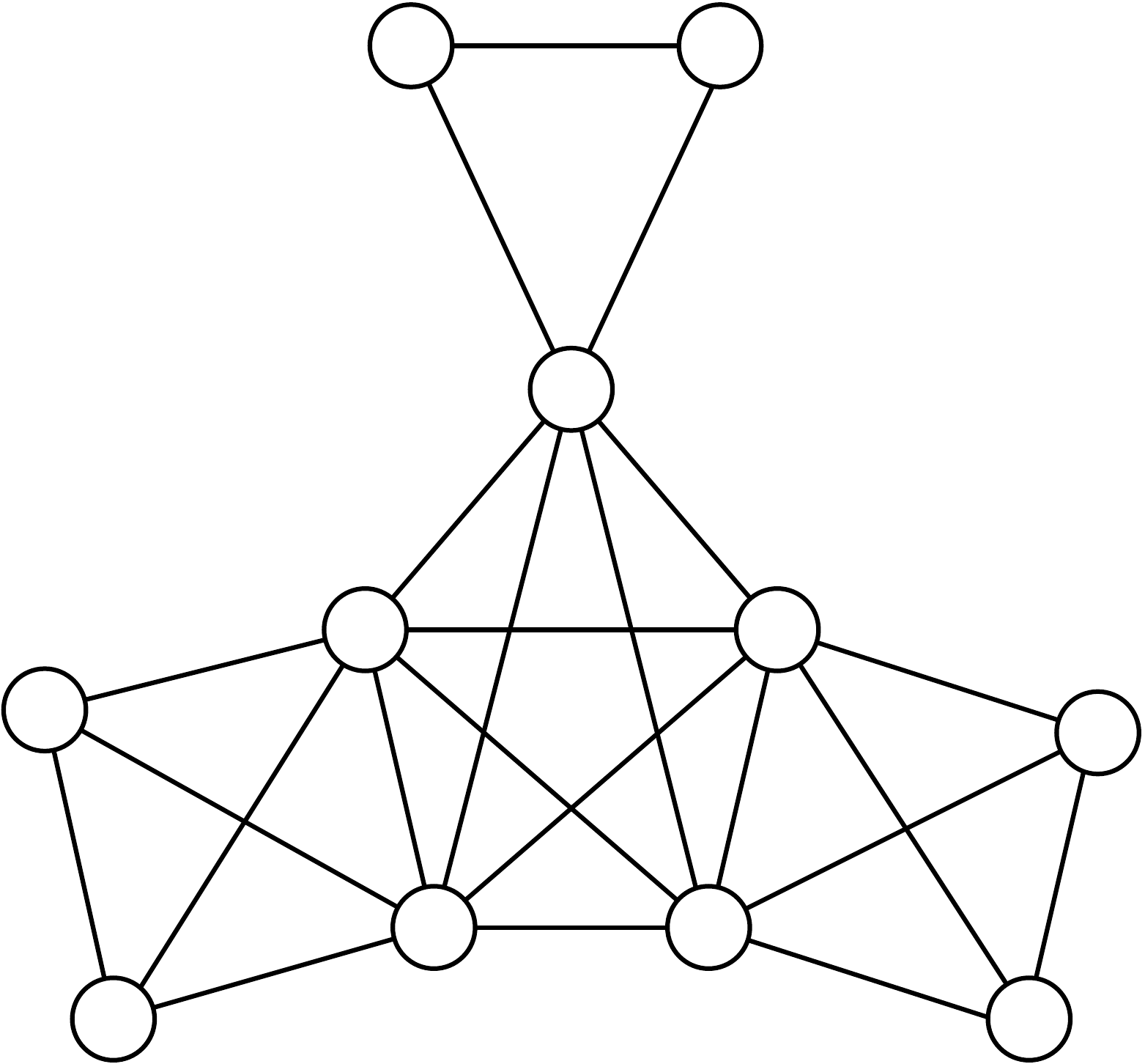} \hspace{.5cm} \includegraphics[scale=0.3]{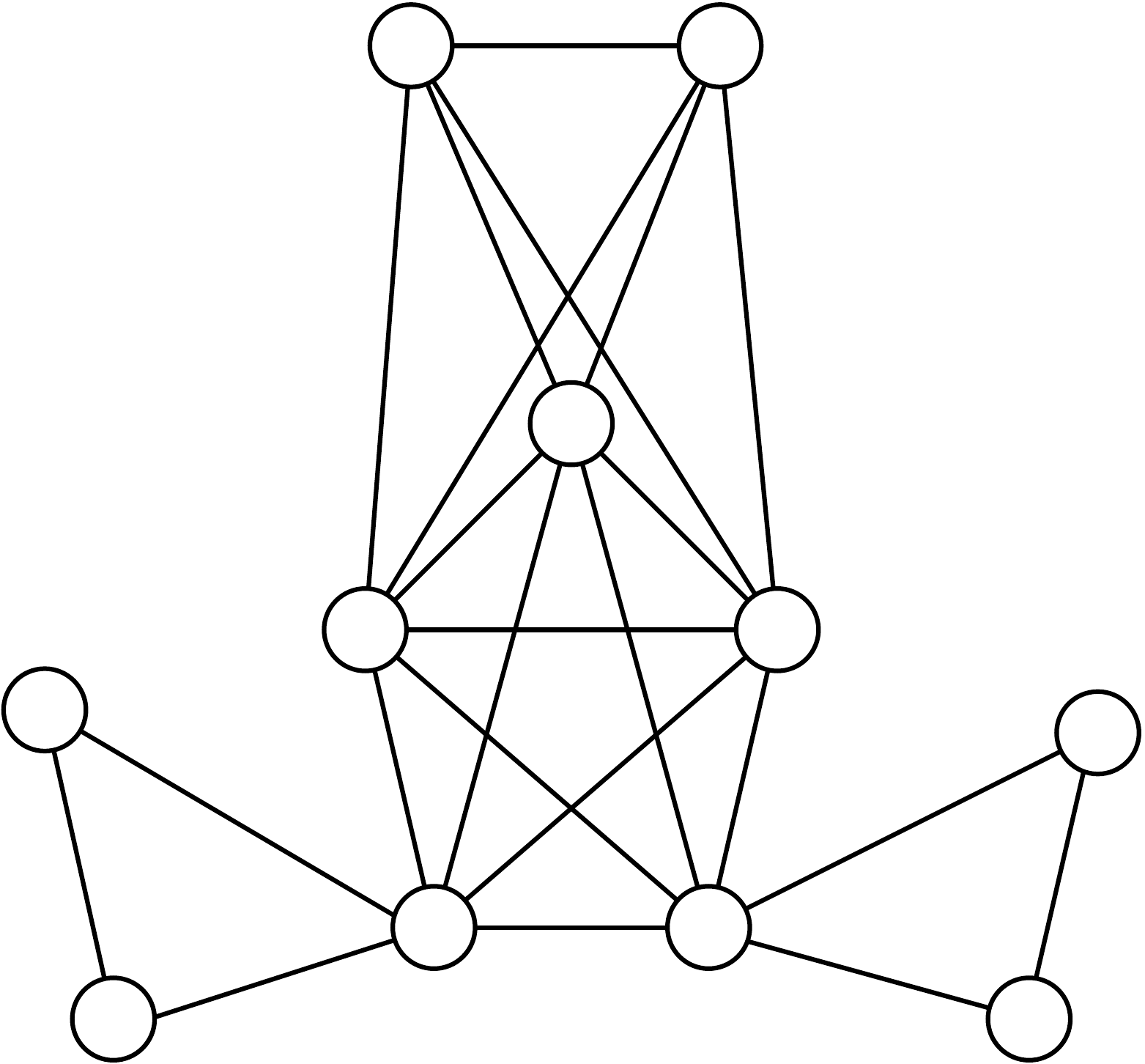}}
       \caption{\label{fig:Q7,5} Two graphs in $\mathcal{Q}_{7,5}$}
\end{figure}
\phantom{.}\hfill $ \Box$
\end{example}

We now show that, as in these examples, graphs in $\mathcal{Q}_{k,s}$ are critical.

\begin{lemma}
Every graph in $\mathcal{Q}_{k,s}$ is $k$-critical and $1$-unique.
\end{lemma}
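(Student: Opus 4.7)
My plan is to show in turn that (A) $\td(Q) = k$, (B) $Q$ is $1$-unique, and (C) $Q$ is $k$-critical. For the upper bound in (A), I would apply Theorem~\ref{thm:newbound} with $S = V(H_0)$: since $Q - S = H_1 + \dots + H_q$ is a disjoint union of copies of $K_{k-s}$ with tree-depth $k-s$, and $Q\langle S\rangle = K_s$ (because $H_0$ is already complete), we obtain $\td(Q) \leq s + (k-s) = k$.

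The main technical step is the lower bound $\td(Q) \geq k$, which I would prove by contradiction. Suppose $\phi$ is a $(k-1)$-ranking of $Q$. Because each $B_i \cup V(H_i)$ is a clique of order $\pi_i + (k-s)$, the sets $\phi(B_i)$ and $\phi(V(H_i))$ are disjoint. Since $|\phi(V(H_0))| = s$, while only $k-1-s$ labels lie outside $\phi(V(H_0))$ and $|\phi(V(H_i))| = k-s$, the set $A_i := \phi(V(H_i)) \cap \phi(V(H_0))$ must be nonempty for every $i$. For any $\ell \in A_i$, witnessed by $u \in V(H_0)$ and $w \in V(H_i)$, disjointness forces $u \in B_j$ for some $j \neq i$ (otherwise $u \in B_i$ is adjacent to $w$ at the same label). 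Then for every $v \in B_i$, the length-two path from $u$ through $v$ to $w$ forces $\phi(v) > \ell$, so $A_i \subseteq \bigcup_{j \neq i} \phi(B_j)$ and $\max A_i < \min \phi(B_i)$. Picking an index $i^{*}$ minimizing $\min \phi(B_{i^*})$ then forces $A_{i^*} = \emptyset$, a contradiction.

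For (B), I would construct explicit optimal rankings witnessing 1-uniqueness. If $v \in V(H_i)$, assign $v$ the label $1$, $V(H_i) \setminus \{v\}$ the labels $\{2, \dots, k-s\}$, $V(H_0)$ the labels $\{k-s+1, \dots, k\}$ so that some vertex of $B_i$ receives $k-s+1$, and each $V(H_j)$ with $j \neq i$ the labels $\{2, \dots, k-s+1\}$. If $v \in B_i$, assign $v$ label $1$, $V(H_0) \setminus \{v\}$ the labels $\{k-s+2, \dots, k\}$, and every $V(H_j)$ the labels $\{2, \dots, k-s+1\}$. Validity in each case is a routine check: within each clique the labels are distinct, and any path between same-labeled vertices in distinct cliques $V(H_j)$ must pass through at least one vertex of $B_{j'} \subseteq V(H_0)$ with $j' \neq i$, whose label strictly exceeds the shared label.

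For (C), Theorem~\ref{thm:conjecture} reduces the task to showing $\td(Q-e) < k$ for each edge $e$. I would split on the type of $e$. (a) If $e$ joins distinct $B_i$ and $B_j$, Theorem~\ref{thm:newbound} with $S = V(H_0)$ gives $(Q-e)\langle S\rangle = K_s - e$ of tree-depth $s-1$ and $(Q-e) - S$ of tree-depth $k-s$, summing to $k-1$. (b) If $e$ lies within a single $B_i$ with endpoints $a_1, a_2$, taking $S = V(H_0) \setminus \{a_1,a_2\}$ yields $(Q-e)\langle S\rangle = K_{s-2}$ and $(Q-e) - S$ with largest component $K_{k-s+2} - e$ of tree-depth $k-s+1$, again summing to $k-1$. (c) If $e$ lies inside some $V(H_i)$ or joins $B_i$ with $V(H_i)$, I would modify the ranking from (B) by giving both endpoints of $e$ the shared label $1$ and arranging the remaining labels to use only $\{1, \dots, k-1\}$. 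The principal obstacle is the lower bound in (A); the edge-deletion analysis in (C) reduces to routine case-checking once the appropriate separator or ranking template is identified.
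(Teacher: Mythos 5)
Your overall architecture is sound, and two of your three pillars take genuinely different (and correct) routes from the paper. For the lower bound, the paper argues by induction on $k$: deleting any vertex of $Q$ leaves a subgraph lying in $\mathcal{Q}_{k-1,s}$ or $\mathcal{Q}_{k,s-1}$ (or $K_{k-1}$), hence of tree-depth $k-1$, so $\td(Q)\geq k$. Your argument instead refutes a hypothetical $(k-1)$-ranking directly: the counting that forces $A_i\neq\emptyset$, the length-two path forcing $\min\phi(B_i)>\max A_i$, and the extremal choice of $i^*$ all check out (note only that for $s=k$ the sets $\phi(V(H_i))$ are empty and the contradiction is instead the trivial one $|\phi(V(H_0))|=k>k-1$; the paper sets $s=k$ aside via Lemma~\ref{lem:families}, and you should too). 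Your self-contained argument avoids the induction entirely, at the cost of not reusing the family structure. For 1-uniqueness, the paper applies Theorem~\ref{thm:starclique} together with Theorem~\ref{thm:newbound} to the star-clique transform, while you build explicit optimal rankings; I verified both of your templates are valid $k$-rankings with $v$ the unique vertex labeled $1$, so this also works and is more hands-on.

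The one genuine soft spot is (C)(c). Your cases (a) and (b) are correct (and parallel the paper's use of Theorem~\ref{thm:newbound}), but the claim that for $e$ inside $V(H_i)$ or between $B_i$ and $V(H_i)$ one can ``modify the ranking from (B)'' by giving both endpoints label $1$ and compressing to $\{1,\dots,k-1\}$ is not routine as stated. After the compression, $V(H_0)$ must occupy labels down to $k-s+1$ (or $k-s$), and if $|B_i|=1$ the vertex of $H_0$ carrying the lowest of these labels necessarily lies in some $B_m$ with $m\neq i$, where it collides with the vertex of $H_m$ carrying the same label in your template $\{2,\dots,k-s+1\}$; repairing this forces a further ad hoc relabeling of that $H_m$ (e.g., letting it reuse label $1$). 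This is fixable, but it is exactly the kind of case analysis your sketch waves off. A cleaner repair, which is what the paper does and which also subsumes your case (b), is to apply Theorem~\ref{thm:newbound} to $Q-e$ with $S=\bigcup_{m\neq i}B_m$: then $(Q-e)\langle S\rangle=K_{s-|B_i|}$ and $(Q-e)-S=(K_{|B_i|+k-s}-e)+(q-1)K_{k-s}$, giving $\td(Q-e)\leq (s-|B_i|)+(|B_i|+k-s-1)=k-1$ uniformly for every edge with both ends in $B_i\cup V(H_i)$.
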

\begin{proof}
The claim was established for the case $s=k$ in Lemma~\ref{lem:families}, so assume that $s<k$. We prove that $\td(Q) = k$ for every graph $Q$ in $\mathcal{Q}_{k,s}$ by induction on $k$. When $k = 2$, the only graph in $\mathcal{Q}_{k,s}$ is $K_2$, which by Lemma~\ref{lem:families} is $2$-critical and $1$-unique. Now suppose the claim is true for the families $\mathcal{Q}_{k',s'}$ for all values of $k'$ less than $k$. Let $Q$ be an arbitrary element of $\mathcal{Q}_{k,s}$. We show that $Q$ is $k$-critical and $1$-unique.

Let $q$ and $H_i,B_i$ ($1 \leq i \leq q$) be the parameter and sets involved in the construction of $Q$ as described above. To see that $\td(Q) \leq k$, we observe from Theorem~\ref{thm:newbound} that \[\td(Q) \leq \td\left(Q \left\langle \bigcup_i B_i \right\rangle\right) + \td\left(Q-\bigcup_i B_i\right) = \td(K_s) + \td(qK_{k-s}) = k.\]

To see that $\td(Q) \geq k$, we show that deleting any vertex from $Q$ leaves a graph having a subgraph with tree-depth at least $k-1$. If $v$ is a vertex from $H_j$ for some $j$, then let $Q'$ be the graph resulting from deleting a vertex from every clique $H_i$, with $v$ being the vertex deleted from $H_j$. Note that $Q'$ is a subgraph of $Q-v$ that belongs to $\mathcal{Q}_{r-1,s}$. If $v$ is a vertex from $B_j$, then $Q-v$ has a component $Q''$ that belongs to $\mathcal{Q}_{k,s-1}$ (or is $K_{k-1}$, if $s=1$). In both cases $Q'$ and $Q''$ have tree-depth $k-1$ by the induction hypothesis or Lemma~\ref{lem:families}.

With $Q$ again representing an element of $\mathcal{Q}_{k,s}$, we now show that every vertex of $Q$ is $1$-unique. Let $v$ be a vertex of $Q$, and let $R$ be the graph resulting from a star-clique transform on $Q$ at $v$. We show that $\td(R)<k$.

If $v$ belongs to some set $H_j$ in $Q$, then $R$ is isomorphic to $Q-v$. Let $w$ be a vertex in $B_j$. By Theorem~\ref{thm:newbound},
\begin{multline*}
\td(R) \leq \td\left(R \left\langle \bigcup_i B_i - \{w\} \right\rangle\right) + \td\left(R - \left(\bigcup_i B_i - \{w\}\right)\right)\\
= \td\left(K_{s-1}\right) + \td\left(qK_{k-s}\right) = k-1.
\end{multline*}

If $v$ instead belongs to $B_j$ in $Q$, then in $R$ the vertex $v$ is deleted and all possible edges are added between $H_j$ and $B_i$ for $i=1,\dots,q$. By Theorem~\ref{thm:newbound},
\begin{multline*}
\td(R) \leq \td\left(R \left\langle \bigcup_{i \neq j} B_i\right\rangle\right) + \td\left(R - \left(\bigcup_{i \neq j} B_i\right)\right)\\
= \td\left(K_{s-|B_j|}\right) + \td\left(K_{|B_j|+k-s-1}+(q-1)K_{k-s}\right) = k-1.
\end{multline*}

Thus $R$ has tree-depth at most $k-1$, and it follows from Theorem~\ref{thm:starclique} that $G$ is 1-unique. We complete the proof by showing that $Q$ is $k$-critical. By Theorem~\ref{thm:conjecture}, it suffices to show that for any edge $e$ of $Q$, deleting $e$ lowers the tree-depth. Let $Q'=Q-e$.

If $e$ joins two vertices in $H_j \cup B_j$ for some $j$, then by Theorem~\ref{thm:newbound},
\begin{multline*}
\td(Q') \leq \td\left(Q' \left\langle \bigcup_{i \neq j} B_i\right\rangle\right) + \td\left( Q' - \bigcup_{i \neq j} B_i\right)\\
= \td\left(K_{s-|B_j|}\right) + \td\left((K_{|B_j|+k-s}-e)+(q-1)K_{k-s}\right) = k-1.
\end{multline*}

If $e=uv$ where $u$ and $v$ belong to distinct sets $B_j$ and $B_{j'}$, then by Theorem~\ref{thm:newbound},
\begin{multline*}
\td(Q') \leq \td\left(Q' \left\langle \bigcup_{i} B_i\right\rangle\right) + \td\left(Q' - \bigcup_{i \neq j} B_i\right)\\
= \td\left(K_{s}-e\right) + \td\left(qK_{k-s}\right)
= k-1.
\end{multline*}
\end{proof}

\bibliographystyle{elsarticle-num}
\bibliography{treedepth}

\end{document}